\documentclass[12pt]{amsart}
\usepackage{mathtools}
\mathtoolsset{showonlyrefs,showmanualtags} 

\usepackage[top=1.5in, bottom=1.5in, left=1.25in, right=1.25in]{geometry}
\usepackage[all]{xy}
\usepackage{amsmath}
\usepackage{amssymb}
\usepackage{amsthm}
\usepackage{amscd}

\newcommand{\EE}{\mathbb  E}

\newcommand{\Z}{\mathbb  Z}

\newcommand{\C}{\mathbb  C}

\numberwithin{equation}{section}

\newtheorem{theorem}[equation]{Theorem}
\newtheorem{definition}[equation]{Definition}
\newtheorem{proposition}[equation]{Proposition}

\newtheorem{lemma}[equation]{Lemma}

\newtheorem{remark}[equation]{Remark}

\DeclareMathOperator{\N}{\mathbb{N}}

\usepackage{hyperref} 
\hypersetup{
    colorlinks=true,       
    linkcolor=blue,          
    citecolor=magenta,        
    filecolor=magenta,      
    urlcolor=cyan           
}

\begin{document}
\title[A Uniform Random Wiener-Wintner Theorem]{A Uniform Random Pointwise Ergodic Theorem}
\author{Ben Krause}
\address{
Department of Mathematics
The University of British Columbia \\
1984 Mathematics Road
Vancouver, B.C.
Canada V6T 1Z2}
\email{benkrause@math.ubc.ca}
\author{Pavel Zorin-Kranich}
\address{Universit\"at Bonn\\
  Mathematisches Institut\\
  Endenicher Allee 60\\
  53115 Bonn\\
  Germany
}
\email{pzorin@uni-bonn.de}

\date{\today}
\maketitle

\begin{abstract}
Let $a_n$ be the random increasing sequence of natural numbers which takes each
value independently with decreasing probability of order $n^{-\alpha}$, $0 <
\alpha < 1/2$.
We prove that, almost surely, for every measure-preserving system $(X,T)$ and
every $f \in L^1(X)$ orthogonal to the invariant factor the modulated, random
averages
\[\sup_{b} \Big| \frac{1}{N} \sum_{n = 1}^N b(n) T^{a_{n}} f \Big|
\]
converge to $0$ pointwise almost everywhere, where the supremum is taken over a
set of bounded functions with certain uniform approximation properties; good examples of such functions are given by
\[ \mathcal{A}_{\delta,M} := \{ e^{2\pi i t^c} : m + \delta \leq c \leq m+1 - \delta, \ 1 \leq m \leq M \} \]
where $M \geq 1$ and $1/2 \geq \delta > 0$ are arbitrary.
This work improves upon previous work of the authors, in which a non-uniform
statement was proven for some specific functions $b$. Under further conditions on $\{b\}$ we prove pointwise convergence to zero of the above averages for general $f \in L^1(X)$; these conditions are met, for instance, by the sets of functions $\mathcal{A}_{\delta,M}$. \end{abstract}

\section{Introduction}
Pointwise ergodic theory concerns the asymptotic pointwise behavior of the
averages
\begin{equation}\label{avg} \frac{1}{N} \sum_{n \leq N} T^{a_n} f
\end{equation}
where $\{a_n \} \subset \N$, and $f$ is some $L^p$ function in a
\emph{measure-preserving system}: a probability space $(X,\mu)$ equipped with a
measure-preserving transformation, $T : X \to X$. Birkhoff's pointwise theorem
\cite{BI} is simply the statement that, when $a_n = n$, the averages in
\eqref{avg} converge pointwise $\mu$-a.e.\ for $f \in L^1(X)$.

In his celebrated paper \cite{B0}, Bourgain initiated a study of ``random''
pointwise ergodic theorems, where the subsequence $\{ a_n\}$ is randomly
generated.

From now on $\{X_n\}$ will denote a sequence of independent $\{0,1\}$ valued
random variables (on a probability space $\Omega$) with expectations
$\sigma_n$.
The \emph{counting function} $a_n(\omega)$ is the smallest integer subject to
the constraint
\[ X_1(\omega) + \dots + X_{a_n(\omega)}(\omega) = n.\]

Bourgain established the following result.
\begin{theorem}[{\cite[Proposition 8.2]{B0}}]
Suppose
\[
\sigma_n = \frac{ (\log \log n)^{B_p}}{n}, \ B_p > \frac{1}{p-1}, \ 1 < p \leq
2.
\]
Then, almost surely, for each measure-preserving system, and each $f \in
L^p(X)$, the averages $\frac{1}{N} \sum_{n\leq N} T^{a_n} f$ converge pointwise
$\mu$-a.e.
\end{theorem}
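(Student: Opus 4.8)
The plan is to follow the standard template for random pointwise ergodic theorems. Write $e(t):=e^{2\pi i t}$, $\sigma_k:=\EE X_k$, $\Sigma_m:=\sum_{k\le m}\sigma_k\sim(\log\log m)^{B_p}\log m$, and $\TT:=\RR/\Z$.

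\emph{Step 1: transference and reduction to deterministic windows.} By the Calder\'on transference principle it suffices to prove, almost surely, an $\ell^p(\Z)$ maximal inequality for the convolution operators attached to the random measures $\mu_N^\omega:=\tfrac1N\sum_{n\le N}\delta_{a_n}$, together with pointwise convergence on a norm-dense subclass of $L^p$. I first replace $\mu_N^\omega$ by the deterministic-window measures $\nu_m^\omega:=\Sigma_m^{-1}\sum_{k\le m}X_k\delta_k$: since $\nu_m^\omega*g$ equals $\bigl(\Sigma_m^{-1}\#\{k\le m:X_k=1\}\bigr)$ times the average of $g(\cdot-a_n)$ over the first $\#\{k\le m:X_k=1\}$ terms, and this normalizing factor tends to $1$ almost surely, the families $\{\nu_m^\omega*g\}_m$ and $\{\mu_N^\omega*g\}_N$ have almost surely comparable maximal functions and identical limits, so it is enough to treat the $\nu_m^\omega$. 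Split $\nu_m^\omega=\bar\nu_m+E_m^\omega$ with $\bar\nu_m:=\EE\nu_m^\omega=\Sigma_m^{-1}\sum_{k\le m}\sigma_k\delta_k$ and $E_m^\omega:=\Sigma_m^{-1}\sum_{k\le m}(X_k-\sigma_k)\delta_k$.

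\emph{Step 2: the expectation part.} The measure $\bar\nu_m$ has density proportional to the slowly varying weight $\sigma_k$ on $\{1,\dots,m\}$ --- a ``logarithmic'' average --- and a dyadic splitting shows $\bar\nu_m*g$ is dominated pointwise by a constant multiple of the Hardy--Littlewood maximal function of $g$, so $g\mapsto\bar\nu_m*g$ obeys an $\ell^p(\Z)$ maximal inequality for every $1<p\le2$. This transfers to any measure-preserving system, and there an Abel-summation argument on the dense class of invariant functions and bounded coboundaries gives a.e.\ convergence of the corresponding averages to $\EE[f\mid\mathcal{I}]$, where $\mathcal{I}$ is the $\sigma$-algebra of invariant sets. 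Since $E_m^\omega g\to0$ everywhere for finitely supported $g$, this will identify the limit of the original averages once the fluctuation is controlled.

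\emph{Step 3: the fluctuation part --- the crux.} It suffices to prove $\|\sup_m|E_m^\omega g|\|_{\ell^p(\Z)}\lesssim_\omega\|g\|_{\ell^p(\Z)}$ almost surely, since then the convergence $E_m^\omega g\to0$ --- which holds everywhere, for every $\omega$, when $g$ is finitely supported, because $n\mapsto a_n$ is strictly increasing --- propagates to all $g\in\ell^p(\Z)$. Let $m_j$ be determined by $\Sigma_{m_j}\approx2^j$, so that $\log m_j\sim2^j(\log\log m_j)^{-B_p}$ and $\log\log m_j\sim j$, and decompose
\[
\sup_m|E_m^\omega g|\ \le\ \sup_j|E_{m_j}^\omega g|\ +\ \sup_j\ \sup_{m_j\le m<m_{j+1}}\bigl|E_m^\omega g-E_{m_j}^\omega g\bigr|.
\]
The Fourier multiplier of $E_m^\omega$ is $\widehat{E_m^\omega}(\theta)=\Sigma_m^{-1}S_m(\theta)$, $S_m(\theta):=\sum_{k\le m}(X_k-\sigma_k)e(k\theta)$, and for fixed $\theta$ the process $m\mapsto S_m(\theta)$ is a martingale with unit-bounded increments and quadratic variation at most $\Sigma_m$; Freedman's maximal Bernstein inequality gives $\PP\bigl(\sup_{m\le m_{j+1}}|S_m(\theta)|>t\bigr)\lesssim\exp(-ct^2/2^j)$ for $t\lesssim2^j$, and likewise for the increments $S_m(\theta)-S_{m_j}(\theta)$ over $[m_j,m_{j+1})$. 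As $\theta\mapsto S_m(\theta)$ is a trigonometric polynomial of degree $\le m_{j+1}$, discretizing $\TT$ along a net of cardinality $m_{j+1}^{O(1)}$ and controlling the derivative upgrades these to bounds uniform in $\theta$; choosing $t$ just above $2^j(\log m_{j+1}/2^j)^{1/2}\sim2^j j^{-B_p/2}$ makes the exceptional probabilities summable in $j$ --- note $t^2/2^j\sim\log m_{j+1}$ is exactly the order of $\log(m_{j+1}^{O(1)})$, which is why the hypothesis must fix the exponent of $\log\log n$ to be precisely $B_p$. Borel--Cantelli then yields, almost surely,
\[
\bigl\|E_{m_j}^\omega\bigr\|_{\ell^2\to\ell^2}\ +\ \Bigl\|\sup_{m_j\le m<m_{j+1}}\bigl(E_m^\omega-E_{m_j}^\omega\bigr)\Bigr\|_{\ell^2\to\ell^2}\ \lesssim_\omega\ j^{-B_p/2+o(1)}\quad\text{for all }j,
\]
and interpolating each of these with the trivial bound $O(1)$ on $\ell^1(\Z)\to\ell^1(\Z)$ (the measures have bounded total variation) produces $\ell^p(\Z)\to\ell^p(\Z)$ operator bounds $\lesssim_\omega j^{-(1-1/p)B_p+o(1)}$. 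Summing the $p$-th powers,
\[
\Bigl\|\sup_j|E_{m_j}^\omega g|\Bigr\|_{\ell^p}^p+\Bigl\|\sup_j\sup_{m_j\le m<m_{j+1}}|E_m^\omega g-E_{m_j}^\omega g|\Bigr\|_{\ell^p}^p\ \lesssim_\omega\ \Bigl(\sum_j j^{-(p-1)B_p+o(1)}\Bigr)\|g\|_{\ell^p}^p,
\]
and the series converges \emph{precisely} when $B_p(p-1)>1$. This is the required maximal inequality; combined with Steps 1--2 it completes the proof.

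\emph{Main obstacle.} The hard part will be the uniform-in-frequency concentration estimate in Step 3: one must run a maximal Bernstein / Freedman inequality for the martingales $S_m(\theta)$ and then chain over $\TT$ at the scale of a polynomial-size net, keeping enough slack in the exponent to absorb both the cardinality of the net and the Borel--Cantelli summation over scales. It is exactly this balance that forces the threshold $B_p>1/(p-1)$, and it is intrinsic, not an artifact --- a smaller exponent makes the random sequence too sparse for $\|E_{m_j}^\omega\|_{\ell^p\to\ell^p}$ to be $p$-th-power-summable in $j$. The $p$-dependence of the hypothesis is likewise intrinsic: the maximal operator of $E_m^\omega$ is \emph{not} bounded near $\ell^1$ (sparse random sequences can be bad for $L^1$), so one cannot descend to $p$ near $1$ by interpolating the $\ell^2$ theory against an $\ell^1$ endpoint, and it is the direct $\ell^p$ analysis that consumes the extra $(\log\log n)^{B_p}$ of density. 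Everything else --- transference, the logarithmic-average bound for the expectation, the identification of the limit --- is routine.
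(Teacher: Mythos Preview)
The paper does not prove this theorem; it is quoted as background from Bourgain \cite[Proposition~8.2]{B0} and no argument is given, so there is no in-paper proof to compare your proposal against. That said, your outline is essentially Bourgain's own proof of the cited proposition: Calder\'on transference, the split $\nu_m^\omega=\bar\nu_m+E_m^\omega$, a Hardy--Littlewood bound for the deterministic weighted average $\bar\nu_m$, and a Bernstein/Freedman concentration estimate for the random Fourier multiplier $\widehat{E_m^\omega}(\theta)$ made uniform in $\theta$ via a polynomial-size net, followed by $\ell^1$--$\ell^2$ interpolation and lacunary summation. The arithmetic you carry out --- $\Sigma_{m_j}\approx 2^j$, $\log m_j\sim 2^j j^{-B_p}$, operator bound $\lesssim j^{-B_p/2}$ on $\ell^2$, hence $\lesssim j^{-B_p(1-1/p)}$ on $\ell^p$, with $p$-th powers summable exactly when $B_p(p-1)>1$ --- is correct and reproduces Bourgain's threshold.

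One step, however, is not justified by the tools you invoke. You assert
\[
\Bigl\|\sup_{m_j\le m<m_{j+1}}\bigl(E_m^\omega-E_{m_j}^\omega\bigr)\Bigr\|_{\ell^2\to\ell^2}\ \lesssim_\omega\ j^{-B_p/2+o(1)},
\]
but the maximal Freedman inequality together with the net argument only yields $\sup_\theta\sup_m|\widehat{E_m^\omega}(\theta)-\widehat{E_{m_j}^\omega}(\theta)|\lesssim j^{-B_p/2}$, which bounds each \emph{individual} operator $E_m^\omega-E_{m_j}^\omega$ on $\ell^2$, not the maximal operator over $m$ in the block; a supremum of Fourier multipliers is not itself a Fourier multiplier, and the block contains far too many values of $m$ for a Rademacher--Menshov argument to close without destroying the exponent. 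The clean fix --- and in effect what Bourgain does --- is to bypass the short variation of the signed error $E_m^\omega$ altogether: once the lacunary maximal inequality $\|\sup_j|E_{m_j}^\omega g|\|_{\ell^p}\lesssim_\omega\|g\|_{\ell^p}$ is established, adding back $\bar\nu_{m_j}$ gives $\|\sup_j|\nu_{m_j}^\omega g|\|_{\ell^p}\lesssim_\omega\|g\|_{\ell^p}$, and then \emph{positivity} of the measures $\nu_m^\omega$ fills in the gaps, since for $g\ge0$ and $m_j\le m<m_{j+1}$ one has $\nu_m^\omega g\le(\Sigma_{m_{j+1}}/\Sigma_{m_j})\,\nu_{m_{j+1}}^\omega g\le 2\,\nu_{m_{j+1}}^\omega g$. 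With that amendment your plan is complete.
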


Our main result is in the spirit of Bourgain's uniform version \cite{BWW} of
the Wiener--Wintner theorem \cite{WW} (see Assani \cite{AWW} for more results
in this direction).
It is uniform over the following classes of weights.

\begin{definition}
A collection of functions $\mathcal{B} := \{ b\}$, $b: \N \to \C$ is
\emph{approximable} if for every $\delta>0$ there exists some $\kappa = \kappa(\delta) > 0$ so
that for every (sufficiently large) integer $N$, there exist finite subsets
$\mathcal{B}_N \subset \mathcal{B}$ with the following two properties:
\begin{itemize}
\item $|\mathcal{B}_N| \leq C_\delta e^{N^\delta}$ for some constant $C_\delta$ depending only on $\delta$;
\item for any $b \in \mathcal{B}$, there exists some $b_0 \in \mathcal{B}_N$ so
that
\[ \sup_{t \leq N} |b(t) - b_0(t)| \leq C_\kappa N^{-\kappa}.\]
for some $C_\kappa = C_{\kappa(\delta)}$.
\end{itemize}
\end{definition}

Good examples of approximable sets of functions are
\[
\mathcal{B}_I := \{ e(n^c) : c \in I \},
\quad e(t):= e^{2\pi i t},
\]
for finite intervals $I \subset \mathbb{R}$; note that finite unions of approximable
sets remain approximable.

\begin{theorem}\label{Main}
Let $\mathcal{B}$ be an
approximable set of functions bounded in magnitude by $1$, and suppose $\sigma_n = n^{-\alpha}$ for some $0 < \alpha < 1/2$.
Then, almost surely, the following holds:
For every measure-preserving system $(X,\mu,T)$, and every $f \in
L^1(X)$ orthogonal to the invariant factor,
\[
\sup_{b \in \mathcal{B}} \Big| \frac{1}{N} \sum_{n\leq N} b(n) T^{a_n(\omega)}f
\Big| \to 0
\]
$\mu$-a.e.
\end{theorem}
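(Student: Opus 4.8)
The plan is to follow Bourgain's method for random pointwise ergodic theorems, fused with the uniform Wiener--Wintner mechanism; the genuinely new input is a \emph{quantitative}, uniform-in-$b$ version of the single-weight estimates, strong enough to survive a union bound over the approximating nets $\mathcal B_N$. By the Calder\'on transference principle it suffices to work on the shift system $(\Z,+1)$: fixing a full-measure set of $\omega$, one wants (i) a maximal inequality --- strong on $\ell^2$ and weak $(1,1)$, with $\omega$-independent constants --- for
\[
M^\omega f := \sup_{N}\ \sup_{b\in\mathcal B}\ \Big|\tfrac1N\sum_{n\le N}b(n)\,f(\cdot+a_n(\omega))\Big|,
\]
and (ii) the pointwise convergence $\sup_{b\in\mathcal B}\big|\tfrac1N\sum_{n\le N}b(n)f(\cdot+a_n(\omega))\big|\to0$ for $f$ in a subclass dense, among functions orthogonal to the invariant factor, in $L^1$ --- say the bounded such functions in $L^2$. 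Since the asymptotic value is the single number $0$, the Banach principle closes the argument: for $f$ orthogonal to the invariant factor write $f=g+(f-g)$ with $g$ in the dense class, apply (ii) to $g$ and the weak-$(1,1)$ bound of (i) to $f-g$.

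Since $a_n(\omega)$ is independent of the space variable, $A^\omega_{N,b}f:=\tfrac1N\sum_{n\le N}b(n)f(\cdot+a_n(\omega))$ is a Fourier multiplier operator on $\Z$ with symbol $m^\omega_{N,b}(\xi)=\tfrac1N\sum_{n\le N}b(n)e(a_n(\omega)\xi)$. Writing $S_v(\omega)=\sum_{m\le v}X_m(\omega)$, using that $m$ is the $S_m$-th sampled integer, and putting $V=V(N)\asymp N^{1/(1-\alpha)}$, one rewrites
\[
m^\omega_{N,b}(\xi)=\tfrac1N\sum_{m\le a_N(\omega)}X_m(\omega)\,b\big(S_m(\omega)\big)\,e(m\xi),
\]
which is the object to control. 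On the ``major arc'' $\{|\xi|\lesssim V^{-1}\}$ the multiplier is an essentially deterministic, slowly varying average at spatial scale $V$, so applied to an $L^2$ function with no spectral mass at $\xi=0$ --- which is exactly what orthogonality to the invariant factor provides --- the corresponding frequency piece of $A^\omega_{N,b}f$ tends to $0$ in $L^2$, uniformly in $b$, by a direct spectral-measure computation. (For the classes $\mathcal A_{\delta,M}$ one moreover has $\tfrac1N\sum_{n\le N}b(n)=O(N^{-\kappa})$ by van der Corput, which is why the stronger, orthogonality-free statement is available for them.)

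The crux is to show that, almost surely,
\[
\sup_{b\in\mathcal B}\ \sup_{V^{-1}\lesssim|\xi|\le1/2}\ |m^\omega_{N,b}(\xi)|\ \le\ N^{-\rho}
\]
for some $\rho>0$ and all large $N$; the reason one expects no non-trivial major arc apart from the origin is that the Bernoulli increments dissolve the periodicities attached to nonzero rationals. I would proceed in three moves. \emph{Finitization of $\sup_b$}: approximability replaces $\sup_{b\in\mathcal B}$ at scale $N$ by $\sup_{b\in\mathcal B_N}$, $|\mathcal B_N|\le C_\delta e^{N^\delta}$, with error $\lesssim C_\kappa N^{-\kappa}\|f\|_\infty$ (the relevant arguments $S_m(\omega)$ all lie in $\{1,\dots,N\}$, so the definition applies verbatim), and this error is summable along $N=2^j$. \emph{Moment bounds in $\omega$}: for fixed $b_0$ and a $V^{-1}$-net of frequencies $\xi$, expand $\EE_\omega|m^\omega_{N,b_0}(\xi)|^{2k}$ and condition on the natural filtration; the pairs with $|m-m'|\le L$ contribute $\lesssim N^{-2}L\sum_{m\le V}\sigma_m^2$, which is harmless precisely because $\alpha<1/2$ makes $\sum_{m\le V}\sigma_m^2\asymp V^{1-2\alpha}=N^{(1-2\alpha)/(1-\alpha)}$ of strictly sublinear size, while the far-from-diagonal pairs are controlled through the decay $|\EE_R[b_0(s+R)]|\lesssim\exp\!\big(-c\,\|\cdot\|^2(\phi(m')-\phi(m))\big)$ of the characteristic function of the Bernoulli increment $R=\sum_{j}X_j$ together with the non-degeneracy of the phase of $b_0$ (this is the oscillatory-sum / circle-method ingredient, and the point where the structure encoded in ``approximable and bounded by $1$'' --- and, for the sharper claims, the explicit exponents in $\mathcal A_{\delta,M}$ --- genuinely enters). \emph{Union bound and Borel--Cantelli}: the resulting $\PP_\omega(|m^\omega_{N,b_0}(\xi)|>N^{-\rho})\le N^{-k\rho'}$ beats $e^{N^\delta}$ times the size of the $\xi$-net once $k$ is large, giving the estimate along $N=2^j$, and a crude short-variation bound between consecutive powers of $2$ upgrades it to all $N$.

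The principal obstacle is the second move carried out \emph{uniformly}: establishing the oscillatory decay of $\EE_R[b(s+R)]$, hence of $m^\omega_{N,b}(\xi)$, with a power saving uniform over the approximable class and over the minor-arc frequencies, while correctly handling the near-resonant configurations of $S_m(\omega)$ against $\xi$. Here the randomness is not a small perturbation of a deterministic main term: for weights such as $e(n^c)$ the fluctuation $S_m-\EE S_m$ is \emph{amplified} by the nonlinearity of $b$ and is precisely the source of cancellation, so the probabilistic and oscillatory estimates must be run in tandem rather than in sequence. The hypothesis $\alpha<1/2$ is what makes this close --- it keeps $\sum_m\sigma_m^2$ strictly sublinear, i.e.\ the effective sample $V^{1-\alpha}$ genuinely larger than $V^{1/2}$, and it fixes the scale $V^{-1}$ of the neighbourhood of the origin that must be excised. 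The remaining heavy component is the weak-$(1,1)$ endpoint of $M^\omega$, needed to run the density argument for $f\in L^1$; this follows the usual circle-method treatment of the major arc together with the minor-arc square function supplied by the estimates above.
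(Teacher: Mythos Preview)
There is a genuine gap in your minor-arc estimate. You claim that the far-from-diagonal contributions to the moments of $m^\omega_{N,b_0}(\xi)$ are controlled by the decay of $\EE_R[b_0(s+R)]$ ``together with the non-degeneracy of the phase of $b_0$'', and you identify this as the place where approximability ``genuinely enters''. But approximability is a pure net-compactness condition: it says nothing about oscillation, and the constant function $b_0\equiv 1$ sits in an approximable class (this is exactly why the theorem is stated only for $f$ orthogonal to the invariant factor). For such $b_0$ one has $\EE_R[b_0(s+R)]=1$ identically, so there is no cancellation to extract; more generally the hypotheses provide no control whatsoever on the Fourier-type quantity you invoke. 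Your moment computation therefore collapses at the off-diagonal step for generic $b_0\in\mathcal B$.

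The paper avoids this entirely by never asking $b$ to oscillate. It centers the selectors, $X_n=Y_n+\sigma_n$, and treats the two pieces separately. The $Y_n$-piece is handled by a physical-space $TT^*$ argument: the kernel $K_N(h;b,b')=\sum_n Y_{n+h}Y_n\,b(S_{n+h-1})\overline{b'(S_{n-1})}$ is a sum of martingale increments (since $Y_{n+h}$ has conditional mean zero), and Freedman's inequality gives exponential concentration $|K_N(h;b,b')|\lesssim_\omega N^{1-2\alpha-2\epsilon}$ with the weights $b,b'$ used only as bounded multipliers. The union bound over $h$ and over $\mathcal B_N\times\mathcal B_N$ costs $e^{N^\delta}$, which the exponential concentration beats. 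The deterministic $\sigma_n$-piece is then disposed of on the dense class $f=h-Th$ by summation by parts; orthogonality to the invariant factor enters here through the coboundary structure, not through a spectral cutoff near $\xi=0$. (Incidentally, the weak-$(1,1)$ bound you flag as a ``heavy component'' is immediate: $|b|\le 1$ gives $M^\omega f\le M_{LV}|f|$, and LaVictoire's theorem applies directly.) Your route could be salvaged by replacing the appeal to phase non-degeneracy with this centering-plus-martingale mechanism, but as written the key off-diagonal estimate fails already for $b_0\equiv 1$.
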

\begin{remark}
The restriction to the orthogonal complement of the invariant factor in Theorem~\ref{Main} can be
removed provided that
\[
\sup_{b \in \mathcal{B}} \Big| \frac{1}{N} \sum_{n\leq N} b(n) \Big| \to 0.
\]
The latter property holds for large classes of \emph{Hardy field} functions, see
\cite[Theorem 2.10]{EK} (with $f = \mathbf{1}_X$); for an introduction to Hardy field functions and their properties, we refer the reader to e.g.\ \cite[\S 2]{EK}.
\end{remark}

Note that by the strong law of large numbers, if $\sigma_n = n^{-\alpha}$, almost surely there exists a constant $C_\omega$ so that for all large $n$, 
\begin{equation}\label{e:growth} C_\omega^{-1} n^{\frac{1}{1-\alpha}} \leq a_n(\omega) \leq C_\omega n^{\frac{1}{1-\alpha}}.
\end{equation}
In particular, our sequence $\{ a_n(\omega) \}$ is asymptotically much denser than the sequence of squares.

This restriction appears in our proof
because we exploit cancellation via a $TT^{*}$ argument.
If $\sigma_{n} = n^{-1/2}$, then almost surely for large $N$, $a_n$ grows like $n^2$, and
fewer than a constant multiple of $N^{1/2}$ elements of the interval $\{1,\dotsc,N\}$ generically appear in the
sequence $\{ a_{n} \}$, so that their difference set fails to cover
$\{1,\dotsc,N\}$ with a (generic) multiplicity which grows with $N$. But, the $TT^*$ argument we use is effective only when the generic behavior of
\[
\sum_{1 \leq n, n+h \leq N} X_{n+h} X_n, \ 1 \leq |h| \leq N 
\]
concentrates strongly around its expected value, which leads to significant cancellation in the centered variant of the above random sum. This concentration occurs whenever $\sigma_n = n^{-\alpha}, \ 0 < \alpha < 1/2$, and LaVictoire's maximal ergodic theorem \cite{LV} similarly exploits this concentration to show that the
corresponding maximal operator has weak type $(1,1)$ under similar conditions.\\

The structure of this paper is as follows:\\
In \textsection\ref{sec:preliminaries} we introduce a few preliminary tools;\\
In \textsection\ref{sec:proposition} we establish our key analytic inequality; \\
Finally, we complete the proof of Theorem \ref{Main} in \textsection \ref{sec:Proof}.

\subsection{Acknowledgments}
We are grateful to Nikos Frantzikinakis for his support, encouragement, and insight. The first author is partially supported by an NSF postdoctoral fellowship.

\section{Preliminaries}\label{sec:preliminaries}
\subsection{Notation and Tools}
With $X_n, \ \sigma_n$ as above, we let $Y_n := X_n - \sigma_n$. We let 
\[ S_{N} =\sum_{n=1}^{N} X_{N} \ \text{ and } \ W_N=\sum_{n=1}^{N} \sigma_{n}\]
so that $W_N$ grows like $N^{1-\alpha}$.

We will make use of the modified Vinogradov notation. We use $X \lesssim Y$, or $Y \gtrsim X$ to denote the estimate $X \leq CY$ for an absolute constant $C$. We use $X \approx Y$ to mean that both $X \lesssim Y$ and $Y \lesssim X$. If we need $C$ to depend on a parameter,
we shall indicate this by subscripts, thus for instance $X \lesssim_\omega Y$ denotes
the estimate $X \leq C_\omega Y$ for some $C_\omega$ depending on $\omega$.

We will require the following large deviation ``martingale" inequality:

Let $\{Z_1,\dots,Z_n\}$ be a sequence of scalar random variables with $|Z_i| \leq 1$ almost surely. Assume also that we have the martingale difference property
\[ \mathbb{E} ( Z_i | Z_1,\dots,Z_{i-1} ) = 0 \]
almost surely for all $1 \leq i \leq n$.
Set 
\[ V_i := \text{Var} ( Z_i | Z_1,\dots,Z_{i-1} ),\] and $T_j := \sum_{i=1}^j V_i$. 
Note that in the case where the $\{Z_i\}$ are independent, $V_i = \mathbb{E} |Z_i|^2$.

Then, we have the following large deviation inequality due to Freedman \cite[Theorem 1.6]{F}.
\begin{proposition}[Freedman's Martingale Inequality, Special Case]\label{FR}
With the above notation, for any real numbers $a,b > 0$,
\[ \mathbb{P}\left( \left| \sum_{i=1}^n Z_i \right| \geq a, T_n \leq b \right) \leq 2 e^{ - \frac{a^2}{2(a+b)} }. \]
\end{proposition}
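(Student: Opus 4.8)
The stated inequality is, up to a slight weakening of its denominator (from $2(b + a/3)$ to $2(a+b)$), Freedman's bound \cite[Theorem 1.6]{F}; the plan is to reproduce the standard exponential-supermartingale proof. Write $\mathcal F_i := \sigma(Z_1,\dots,Z_i)$ and $M_j := \sum_{i=1}^j Z_i$, and fix a parameter $\lambda > 0$, setting $\psi(\lambda) := e^{\lambda} - 1 - \lambda \ge 0$.

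First I would record the elementary bound $e^{\lambda x} \le 1 + \lambda x + \psi(\lambda)\, x^{2}$, valid for every real $x \le 1$, which holds because $x \mapsto (e^{\lambda x} - 1 - \lambda x)/x^{2}$ is nondecreasing and hence bounded on $(-\infty, 1]$ by its value $\psi(\lambda)$ at $x = 1$. Substituting $x = Z_i$ (legitimate since $|Z_i| \le 1$), taking $\EE(\,\cdot \mid \mathcal F_{i-1})$, and using $\EE(Z_i \mid \mathcal F_{i-1}) = 0$ together with $\EE(Z_i^{2} \mid \mathcal F_{i-1}) = V_i$ gives
\[
\EE\big( e^{\lambda Z_i} \mid \mathcal F_{i-1} \big) \le 1 + \psi(\lambda)\, V_i \le e^{\psi(\lambda) V_i},
\]
so that $U_j := \exp\big( \lambda M_j - \psi(\lambda)\, T_j \big)$ is a nonnegative supermartingale with $U_0 = 1$, and therefore $\EE U_n \le 1$.

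Next, since $\lambda > 0$ and $\psi(\lambda) \ge 0$, on the event $\{ M_n \ge a,\ T_n \le b \}$ one has $U_n \ge \exp(\lambda a - \psi(\lambda) b)$, and Markov's inequality yields $\PP( M_n \ge a,\ T_n \le b ) \le e^{-\lambda a + \psi(\lambda) b}\, \EE U_n \le e^{-\lambda a + \psi(\lambda) b}$. I would then optimize by choosing $\lambda = \log(1 + a/b)$, which produces the Bennett-type bound $\exp\big(-b\, h(a/b)\big)$ with $h(u) := (1+u)\log(1+u) - u$. Repeating the computation with $Z_i$ replaced by $-Z_i$ — still a martingale difference sequence with the same conditional variances $V_i$ — and summing the two tail estimates contributes the constant $2$, so $\PP( |M_n| \ge a,\ T_n \le b ) \le 2\exp(-b\, h(a/b))$. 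Finally I would invoke the scalar inequality $h(u) \ge \frac{u^{2}}{2(1+u)}$ for $u \ge 0$ (verified by noting that the difference and its first derivative vanish at $u = 0$ while its second derivative equals $\frac{2u + u^{2}}{(1+u)^{3}} \ge 0$), applied with $u = a/b$; since then $b\, h(a/b) \ge \frac{a^{2}}{2(a+b)}$, the claim follows.

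The argument is essentially routine, and I do not anticipate a real obstacle. The one place that genuinely matters is the conditional exponential-moment bound: one must use the monotonicity of $(e^{\lambda x} - 1 - \lambda x)/x^{2}$, since a cruder Hoeffding-type estimate would not produce a bound governed by $T_n$, which is exactly the feature exploited later. The remaining work — the two one-variable inequalities and the optimization in $\lambda$ — is bookkeeping. I note that stopping at the predictable time $\tau := \min\{ j : T_{j+1} > b \}$ and applying the optional stopping theorem to $U_{\tau \wedge n}$ would upgrade the conclusion to the maximal form $\PP( \exists\, n : M_n \ge a,\ T_n \le b ) \le 2\, e^{-a^{2}/(2(a+b))}$, but this refinement is not needed here.
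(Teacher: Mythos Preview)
Your proof is correct and follows the classical exponential-supermartingale route that underlies Freedman's original argument in \cite{F}. Note, however, that the paper does not supply its own proof of this proposition: it is stated as a quotation of \cite[Theorem~1.6]{F} and used as a black box, so there is nothing in the paper to compare against beyond the citation. Your write-up is a faithful and self-contained reconstruction of that standard proof, including the sharp conditional moment bound via the monotonicity of $(e^{\lambda x}-1-\lambda x)/x^{2}$, the Bennett-type optimization, and the passage to the simplified exponent via $h(u)\ge u^{2}/(2(1+u))$.
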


\section{A Key Proposition}\label{sec:proposition}
The focus of this section is to prove an $\ell^2(\Z)$ inequality for functions $f : \Z \to \mathbb{C}$. Here is the set-up.

Fix some constant of lacunarity $\rho > 1$, which we will think of as arbitrarily close to $1$; henceforth, all upper case indices, $M,N$, etc.\ will belong to the sequence
\[ \{ \lfloor \rho^k \rfloor : k \geq 0\}.\]

For each $N$, suppose that $B_N$ are a finite collection functions, all bounded in magnitude by $1$, with 
\[ |B_N| \lesssim_\delta e^{N^{\delta}}\] for any $\delta > 0$.

We will be interested in bounding the $\ell^2$-norm of the maximal functions
\[ \mathcal{M}_N f(x) := \sup_{b \in B_N} \left| \frac{1}{N^{1-\alpha}} \sum_{n \leq N} Y_n b(S_{n-1}) f(x-n) \right| \]
with high probability. Here is our proposition.

\begin{proposition}\label{KEY}
Suppose $0 < \alpha < 1/2$. Then, for some $\epsilon = \epsilon(\alpha) > 0$, $\omega$-almost surely we may estimate
\[ \| \mathcal{M}_N f \|_{\ell^2} \lesssim_\omega N^{-\epsilon} \|f \|_{\ell^2}.\]
\end{proposition}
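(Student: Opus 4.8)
The plan is to prove the bound on the individual maximal function $\M_N$ via a combination of a moment (entropy) estimate and Freedman's martingale inequality, and then deduce the $\ell^2$ bound $\omega$-almost surely by a Borel--Cantelli argument over the lacunary sequence of scales $N \in \{\lfloor \rho^k\rfloor\}$. The key structural observation is that, for fixed $x$ and fixed $b \in B_N$, the partial sums $\sum_{n \leq m} Y_n b(S_{n-1}) f(x-n)$ form a martingale in $m$ with respect to the filtration generated by $X_1, \dots, X_m$: indeed $Y_n = X_n - \sigma_n$ is centered given the past, $S_{n-1}$ and $f(x-n)$ are measurable with respect to $X_1,\dots,X_{n-1}$ and hence ``predictable'', and $|Y_n b(S_{n-1}) f(x-n)| \lesssim |f(x-n)|$. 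The conditional variances satisfy $V_n = \sigma_n(1-\sigma_n) |b(S_{n-1})|^2 |f(x-n)|^2 \lesssim \sigma_n |f(x-n)|^2$, so $T_N \lesssim \sum_{n\leq N}\sigma_n |f(x-n)|^2 \leq \|\sigma\|_\infty \|f\|_{\ell^2}^2$ — but this crude bound loses the cancellation we need, so we will localize in $x$ and in the size of $f$.

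First I would reduce to $f = \mathbf{1}_E$ for a finite set $E$ by a layer-cake / interpolation argument, or more precisely normalize $\|f\|_{\ell^2}=1$ and exploit that the left side is a genuine $\ell^2$ estimate so it suffices to control $\sum_x \M_N f(x)^2$. For each $x$ write the inner sum as $N^{-(1-\alpha)} G_{b,x}(N)$ where $G_{b,x}$ is the above martingale; apply Freedman's inequality (Proposition~\ref{FR}) with $a = \lambda N^{1-\alpha}$ and the deterministic bound $T_N \leq b_0 := C\sum_{n\leq N}\sigma_n|f(x-n)|^2$, which is $\lesssim N^{-\alpha}\sum_n |f(x-n)|^2$ after using $\sigma_n = n^{-\alpha}$ and summation by parts (the weights $n^{-\alpha}$ are decreasing). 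This gives, for each fixed $b$ and $x$,
\[
\PP\Big( N^{-(1-\alpha)}|G_{b,x}(N)| \geq \lambda \Big) \leq 2\exp\Big( - \frac{\lambda^2 N^{2(1-\alpha)}}{2(\lambda N^{1-\alpha} + b_0)}\Big).
\]
Then I would take a union bound over the $\lesssim_\delta e^{N^\delta}$ choices of $b \in B_N$ and sum (or integrate) over $x$, choosing $\lambda = N^{-\epsilon}$ with $\epsilon$ small. The point is that the Gaussian-type tail $e^{-cN^{2(1-\alpha)}\lambda^2/b_0}$, with $b_0 \lesssim N^{-\alpha}\|f\|^2$ in the relevant regime, beats $e^{N^\delta}$ precisely because $2(1-\alpha) - \alpha = 2 - 3\alpha$ and the exponent $\lambda^2 N^{2(1-\alpha)+\alpha} = N^{2-3\alpha-2\epsilon}$ stays a positive power of $N$ when $\alpha < 1/2$ (so that $2 - 3\alpha > 1/2 > 0$) and $\delta, \epsilon$ are chosen small. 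This yields $\PP(\|\M_N f\|_{\ell^2} > N^{-\epsilon}\|f\|_{\ell^2}) \leq 2 e^{-N^{c}}$ for some $c > 0$, uniformly over $f$ once one discretizes the range of $|f|$ appropriately (a standard pigeonholing over dyadic heights, absorbing logarithmic losses into the exponent).

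Finally, since $N$ ranges over the lacunary set $\{\lfloor \rho^k \rfloor\}$ and $\sum_k 2 e^{-\rho^{ck}} < \infty$, Borel--Cantelli gives that $\omega$-almost surely $\|\M_N f\|_{\ell^2} \lesssim_\omega N^{-\epsilon}\|f\|_{\ell^2}$ for all large $N$ in the sequence, with an implied constant depending on $\omega$ (and $\alpha$, $\delta$) but not on $f$ — here one must be slightly careful that the exceptional null set does not depend on $f$; this is handled by first proving the estimate for $f = \mathbf{1}_E$ over all finite $E$ simultaneously (the tail bound already being summable over the countably many finite subsets once combined with the scale sum, or by a further union bound over $E \subset \{|n| \leq N^{C}\}$ whose cardinality is at most $2^{N^{C}}$, still beaten by the doubly-exponential-in-a-power tail when $\alpha<1/2$), then passing to general $f \in \ell^2$ by density and the sublinearity of $\M_N$. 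The main obstacle is the bookkeeping in this last step: making the single null set work for all $f$ forces the union bound over $x$ (equivalently over finite sets $E$) to be a union bound over $\sim 2^{N^C}$ objects, and one needs the Freedman exponent $N^{2-3\alpha - 2\epsilon}$ to dominate $N^{C\delta'}$-type losses — this is exactly where the hypothesis $\alpha < 1/2$ is sharp and where the $TT^*$-flavored concentration alluded to in the introduction is doing its work.
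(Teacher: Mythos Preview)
Your approach has a genuine gap in the final step, and it is precisely the step that the paper's $TT^*$ argument is designed to handle. Applying Freedman directly to $G_{b,x}(N)=\sum_{n\leq N} Y_n\,b(S_{n-1})\,f(x-n)$ gives, for each fixed $f$, $x$, $b$, a tail bound $\exp(-c\,a^2/(a+b_0))$ with $b_0=\sum_{n\leq N}\sigma_n|f(x-n)|^2$. Your claimed estimate $b_0\lesssim N^{-\alpha}\|f\|^2$ is false: the weights $\sigma_n=n^{-\alpha}$ are \emph{largest} near $n=1$, so for $f$ supported at $x-1$ one has $b_0=\|f\|^2$, not $N^{-\alpha}\|f\|^2$. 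The only uniform bound available (for $|f|\leq 1$) is $b_0\leq W_N\approx N^{1-\alpha}$, yielding a Freedman exponent $a^2/b_0\approx N^{1-\alpha-2\epsilon}$. Since $1-\alpha<1$, this is $o(N)$ and cannot absorb a union bound over the $2^N$ indicators of subsets of an interval of length $N$, let alone $2^{N^C}$. (Freedman is also not ``doubly exponential in a power'' as you write; it is $e^{-N^\beta}$, singly exponential in a power of $N$.) The exceptional set therefore genuinely depends on $f$, and no discretization over $f$ rescues a uniform operator bound from this route.

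The paper avoids this by linearizing $\mathcal{M}_Nf=|T_Nf|$ and passing to $T_NT_N^*$, whose kernel
\[
K_N(h;b,b')=\sum_{1\leq n,\,n+h\leq N} Y_{n+h}Y_n\,b(S_{n+h-1})\,\overline{b'}(S_{n-1})
\]
is a martingale sum that is \emph{independent of $f$}. Freedman applied to these kernels, followed by a union bound only over $b,b'\in B_N$ and $1\leq|h|\leq N$ (a set of size $\lesssim_\delta N e^{2N^\delta}$), gives $\sup_{b,b',h}|K_N(h;b,b')|\lesssim_\omega N^{1-2\alpha-2\epsilon}$ almost surely; this in turn yields $|T_NT_N^*f|\lesssim_\omega N^{-2\epsilon}M_{HL}f$ \emph{deterministically} in $f$, and hence the operator norm bound for all $f\in\ell^2$ with a single null set. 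The passage to $TT^*$ is not cosmetic: it is exactly what decouples the randomness from $f$, and your proposal is missing this idea.
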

\begin{proof}
The proof is by linearization and $TT^*$. Specifically, for an appropriate disjoint partition of $\Z$, $\{ E_b \}_{b \in B_N}$, we may express
\[ \mathcal{M}_N f = |T_Nf |, \]
where
\[ T_Nf(x) := \frac{1}{N^{1-\alpha}} \sum_{b \in B_N} \mathbf{1}_{E_b}(x) \sum_{n \leq N} Y_n b(S_{n-1}) f(x-n). \]
Then, $T_NT_N^*f(x)$ can be expressed as the sum of two terms
\begin{equation}\label{main term} \frac{1}{N^{2-2\alpha}} \sum_{b,b'} \mathbf{1}_{E_b}(x) \sum_{0 < |h| \leq N} K_N(h;b,b') (\mathbf{1}_{E_{b'}}f)(x+h),\end{equation}
where
\begin{equation}\label{K}
K_N(h; b, b') := \sum_{1\leq n, n+h \leq N} Y_{n+h} Y_n b(S_{n+h-1}) \overline{b'}(S_{n-1})
\end{equation}
and
\begin{equation}\label{simple term}
\left(\frac{1}{N^{2-2\alpha}} \sum_{n \leq N} Y_n^2\right) \cdot f(x). \end{equation}
The goal will now be to show that, $\omega$-almost surely
\begin{equation}\label{goal} \| T_NT_N^* f\|_{\ell^2} \lesssim_\omega N^{-2 \epsilon} \|f\|_{\ell^2}.
\end{equation}
Now, by Proposition \ref{FR}, or more simply by Chernoff's inequality, \cite{TV}, and a Borel-Cantelli argument, we see that $\omega$-almost surely
\[ \| \eqref{simple term} \|_{\ell^2} \lesssim_\omega N^{\alpha - 1} \|f\|_{\ell^2},\]
so we will disregard it in what follows. We will also restrict attention in what follows to positive $1 \leq h \leq N$, as the case of negative $h$ can be handled by similar arguments.

We begin with the following observation, which we state in the form of the following lemma. 
\begin{lemma}\label{Tech}
For any $\delta > 0$, there exists an absolute constant $c$ so that
\[ \mathbb{P}\left( \sup_{1 \leq h \leq N} \left| \sum_{n=1}^{N-h} \sigma_{n+h} \left( |Y_n|^2 - \mathbb{E}|Y_n|^2 \right) \right| \gtrsim N^{1-2\alpha} \right) \lesssim_\delta e^{ -c N^{1-\alpha - \delta}}. \]
\end{lemma}
\begin{proof}[Sketch]
The trivial union bound allows one to estimate the inner probability without the supremum, for a sub-exponential loss in $N$. The result then follows from Proposition \ref{FR}, or more simply from Chernoff's inequality, \cite{TV}.
\end{proof}

Let us consider the kernel
\[K_N(h; b, b') := \sum_{1\leq n, n+h \leq N} Y_{n+h} Y_n b(S_{n+h-1}) \overline{b'}(S_{n-1});\]
since $Y_{n+h}$ is independent from all other random variables appearing in each summand, $K_N(h;b,b')$ is a sum of martingale increments. Its conditional variance is given by
\[ T_N(h) := \sum_{n=1}^{N-h} \sigma_{n+h}^2 |Y_n|^2.\]
We expand the foregoing out as
\[ \sum_{n=1}^{N-h} \sigma_{n+h} \EE |Y_n|^2 + \sum_{n=1}^{N-h} \sigma_{n+h} \left( |Y_n|^2 - \mathbb{E}|Y_n|^2 \right),\]
which we may bound, in light of the previous technical Lemma \ref{Tech}, by a constant multiple of $N^{1-2\alpha}$ away from a set of probability $\lesssim_\delta e^{-c N^{1-\alpha - \delta}}$.

We will now apply Proposition \ref{FR} to estimate the magnitudes of 
\[ \{ K_N(h;b,b') : 1 \leq |h| \leq N, \ b,b' \in B_N \}.\]
First, choose $\epsilon = \epsilon(\alpha) > 0$ so small that 
\[ 1 - 2\alpha - 10 \epsilon > 0,\]
and bound
\[ \mathbb{P}( |K_N(h;b,b')| \gtrsim N^{1-2\alpha - 2\epsilon} ) \]
by a constant multiple (determined by $\epsilon = \epsilon(\alpha)$) of 
\[ 
\mathbb{P}(|K_N(h;b,b')| \gtrsim N^{1-2\alpha - 2 \epsilon}, |T_N(h)| \lesssim N^{1-2\alpha}) + e^{-c N^{1-\alpha - \epsilon}}; \]
Freedman's Martingale inequality, Proposition \ref{FR}, then allows us to bound the foregoing by a constant multiple of
\[ e^{-c N^{1-2\alpha - 4\epsilon}}.\]
Using the crude union bound, and the cardinality estimate 
\[ |B_N| \lesssim_\epsilon e^{ N^{\epsilon}},\]
we may pass to the estimate
\begin{equation}\label{conclusion0}
\mathbb{P}\left( \sup_{b,b' \in B_N} \sup_{1\leq |h| \leq N} |K_N(h;b,b')| \gtrsim N^{1-2\alpha - 2\epsilon} \right) 
\lesssim e^{-c N^{1-2\alpha - 5\epsilon}}.
\end{equation}

In particular, by a Borel-Cantelli argument, $\omega$-almost surely, 
\begin{equation}\label{conclusion1}
\sup_{b,b' \in B_N} \sup_{1\leq |h| \leq N} |K_N(h;b,b')| \lesssim_\omega N^{1-2\alpha - 2 \epsilon}.
\end{equation}

We are now ready to quickly prove Proposition \ref{KEY}:

Up to \eqref{simple term}, we may almost surely bound
\[ \aligned 
|T_N T_N^* f(x)| &\lesssim_\omega \frac{1}{N^{2-2\alpha}} \sum_{b,b' \in B_N} \mathbf{1}_{E_b}(x) N^{1-2\alpha - 2 \epsilon} \sum_{ 1 \leq |h| \leq N} (\mathbf{1}_{E_{b'}}f)(x+h) \\
&\lesssim_\omega N^{-2\epsilon} M_{HL} f(x),\endaligned\]
where $M_{HL}$ is the standard Hardy-Littlewood maximal function. The result follows.
\end{proof}

With this in mind, we are ready for our proof of Theorem \ref{Main}.

\section{The Proof of Theorem \ref{Main}}\label{sec:Proof}
We begin by using the almost-sure weak-type $1-1$ boundedness of the maximal function
\[ \aligned 
M_{LV}f &:= \sup_N \frac{1}{N} \sum_{n\leq N} T^{a_n(\omega)}|f| \\
&\equiv \sup_{N : S_N \neq 0} \frac{1}{S_N} \sum_{n \leq N} X_n(\omega) T^n |f| \\
&\approx_\omega
\sup_N \frac{1}{W_N} \sum_{n \leq N} X_n(\omega) T^n |f|
\endaligned \]
\cite{LV} to replace general $L^1$ functions appearing in the statement of Theorem \ref{Main} by $f \in \{ h - Th : h \in L^{\infty}(X) \}$; the full strength of Theorem \ref{Main} may be recovered by a standard density argument. Note the usage of the strong law of large numbers to conclude that, $\omega$-almost surely
\[ S_n \approx_\omega W_n \]
for all $n$ such that $S_n \neq 0$.

Now, by the boundedness of $f$ and Rosenblatt, Wierdl \cite[Lemma 1.5]{RW}, it is enough to restrict attention to lacunary sequences $N \in \{ \lfloor \rho^{k} \rfloor : k\geq 0\}$, where $\rho$ is taken from a countable sequence converging to $1$. We will fix some $\rho > 1$ throughout, and the averaging parameters are assumed to belong to $\{ \lfloor \rho^k \rfloor : k \geq 0\}$. 

By definition, it is enough to prove the stated convergence for
\[ \sup_{b \in \mathcal{B}} \left| \frac{1}{S_N} \sum_{n \leq N} X_n b(S_n)  T^{n} f \right|;\]
since 
\[ X_n b(S_n) = X_n b(S_{n-1} + 1)\]
for any function $b$, it is enough to prove pointwise convergence to zero (along lacunary times) for
\begin{equation}\label{max1}
 \sup_{b \in \mathcal{B}} \left| \frac{1}{S_N} \sum_{n \leq N} X_n b(S_{n-1}+1) T^{n} f \right|.
\end{equation}
By the strong law of large numbers (or by an easy application of Chernoff's inequality \cite{TV}), we know that almost surely $\frac{S_N}{W_N} \to 1$; consequently we may instead prove our convergence result for
\[ \sup_{p \in \mathcal{B}} \left| \frac{1}{W_N} \sum_{n \leq N} X_n b(S_{n-1}+1) T^{n} f \right|.\]

By the definition of approximability, for any $b \in \mathcal{B}$ and $N \in \lfloor \rho^{\mathbb{N}} \rfloor$, there exists a $b_0 \in \mathcal{B}_N \subset \mathcal{B}$ so that
\[ \aligned
&\left| \frac{1}{W_N} \sum_{n \leq N} X_n b(S_{n-1}+1) T^{n} f - 
\frac{1}{W_N} \sum_{n \leq N} X_n b_0(S_{n-1}+1) T^{n} f \right| \\
& \qquad \lesssim
\frac{1}{W_N} \sum_{n\leq N} X_n \cdot \left( \sup_{t \leq N} |b(t) - b_0(t)| \right) \cdot T^n |f| \\
& \qquad \qquad \lesssim_\omega
N^{-\kappa} M_{LV} f, \endaligned\]
for some $\kappa > 0$.
In particular, almost surely, for each $N$ we may bound
\[ \sup_{b \in \mathcal{B}} \left| \frac{1}{W_N} \sum_{n \leq N} X_n b(S_{n-1}+1) T^{n} f \right| \lesssim_\omega 
\sup_{b \in \mathcal{B}_N} \left| \frac{1}{W_N} \sum_{n \leq N} X_n b(S_{n-1}+1) T^{n} f \right| + N^{-\kappa} M_{LV} f;\]
since almost surely this latter functions tends to zero pointwise $\mu$-a.e. it suffices to consider the first term on the right. Since $W_N = c_\alpha N^{1-\alpha} + O(1)$, we may replace the normalizing factor $W_N$ with $N^{1-\alpha}$. At this point, we have reduced the problem to showing that, under the above hypotheses, $\omega$-almost surely, for any measure-preserving system $(X,\mu,T)$,
\begin{equation}\label{goal}
\lim_{N\to\infty, N\in \lfloor \rho^{\mathbb{N}}\rfloor} \sup_{b \in \mathcal{B}_N} \left| \frac{1}{N^{1-\alpha}} \sum_{n \leq N} X_n b(S_{n-1}+1) T^{n} f \right| = 0
\end{equation}
$\mu$-a.e. for each $f \in L^1(X)\cap L^\infty(X)$.

By Proposition \ref{KEY}, Calder\'{o}n's transference principle \cite{C}, and a Borel-Cantelli argument, we know that, almost surely,
\[
\sup_{b \in \mathcal{B}_N} \left| \frac{1}{N^{1-\alpha}} \sum_{n \leq N} Y_n b(S_{n-1}+1) T^{n} f \right| \to 0
\]
$\mu$-a.e. along lacunary times. This is since, $\omega$-almost surely,
\[ \aligned
&\mu\left( \limsup_N \sup_{b \in \mathcal{B}_N} \left| \frac{1}{N^{1-\alpha}} \sum_{n \leq N} Y_n b(S_{n-1}+1) T^{n} f \right| \geq t \right) \\
& \qquad \leq \lim_M
\sum_{N \geq M} \mu\left( \sup_{b \in \mathcal{B}_N} \left| \frac{1}{N^{1-\alpha}} \sum_{n \leq N} Y_n b(S_{n-1}+1) T^{n} f \right| \geq t \right) \\
& \qquad \qquad \lesssim_\omega \lim_M \sum_{N \geq M} t^{-2} N^{-2\epsilon} \| f\|_{L^2(X)}^2 \\
&\qquad \qquad \qquad \lesssim \lim_M M^{-2\epsilon} t^{-2}\| f\|_{L^2(X)}^2 = 0; \endaligned\]
we were able to estimate $\sum_{N \geq M } N^{-2\epsilon} \lesssim M^{-2\epsilon}$ since our averaging parameters are restricted to a lacunary sequence.
The upshot is that we have reduced matters to proving the following lemma. 
\begin{lemma}\label{final}
$\omega$-almost surely, for any measure-preserving system, and each simple $f = h-Th$, $h\in L^\infty(X)$
\begin{equation}\label{max3}
 \sup_{b \in \mathcal{B}_N} \left| \frac{1}{N^{1-\alpha}} \sum_{n \leq N} \sigma_n b(S_{n-1}+1) T^{n} f \right| \to 0
\end{equation}
$\mu$-a.e.
\end{lemma}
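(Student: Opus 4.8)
The plan is to exploit the coboundary structure $f = h - Th$ through summation by parts; after Proposition~\ref{KEY} the genuinely analytic work is already done, and replacing $Y_n$ by its mean $\sigma_n$ leaves a sum that telescopes. Throughout, every bound will use only $|b| \leq 1$ and $\|h\|_\infty < \infty$, so the supremum over $b \in \mathcal{B}_N$ is harmless; incidentally, the cardinality bound on $\mathcal{B}_N$ is not even needed at this stage.

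Write $T^n f = T^n h - T^{n+1} h$ and set $c_n := \sigma_n b(S_{n-1}+1)$, so that Abel summation yields
\[
\sum_{n=1}^N c_n (T^n h - T^{n+1}h) = c_1 \, T h + \sum_{n=2}^N (c_n - c_{n-1}) T^n h - c_N \, T^{N+1} h .
\]
After dividing by $N^{1-\alpha}$ the two boundary terms are bounded pointwise, for $\mu$-a.e.\ $x$, by $\sigma_1 N^{\alpha-1}\|h\|_\infty$ and $\sigma_N N^{\alpha - 1}\|h\|_\infty = N^{-1}\|h\|_\infty$, both of which tend to $0$ since $0<\alpha<1$. For the main sum we split
\[
c_n - c_{n-1} = (\sigma_n - \sigma_{n-1}) b(S_{n-1}+1) + \sigma_{n-1}\bigl(b(S_{n-1}+1) - b(S_{n-2}+1)\bigr).
\]
Since $n \mapsto \sigma_n$ is decreasing, $\sum_{n=2}^N |\sigma_n - \sigma_{n-1}| = \sigma_1 - \sigma_N \leq 1$, so the first piece contributes at most $N^{\alpha - 1}\|h\|_\infty \to 0$. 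In the second piece $S_{n-1} - S_{n-2} = X_{n-1} \in \{0,1\}$, so $b(S_{n-1}+1) - b(S_{n-2}+1)$ vanishes unless $X_{n-1} = 1$, in which case it has modulus at most $2$; hence this piece is bounded pointwise by $\tfrac{2\|h\|_\infty}{N^{1-\alpha}} \sum_{n=2}^N \sigma_{n-1} X_{n-1}$.

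It remains only to dominate this last, purely random, sum. The summands $\sigma_{n-1}X_{n-1}$ are independent, lie in $[0,1]$, and satisfy $\mathbb{E}[\sigma_{n-1}X_{n-1}] = \sigma_{n-1}^2 = (n-1)^{-2\alpha}$, so, because $2\alpha < 1$, $\mathbb{E}\sum_{n \leq N}\sigma_{n-1}X_{n-1} \approx N^{1-2\alpha}$. Chernoff's inequality then gives $\mathbb{P}\bigl(\sum_{n\leq N}\sigma_{n-1}X_{n-1} \gtrsim N^{1-2\alpha}\bigr) \lesssim e^{-cN^{1-2\alpha}}$, which is summable over $N \in \lfloor \rho^{\mathbb{N}}\rfloor$; a Borel--Cantelli argument thus shows that $\omega$-almost surely $\sum_{n\leq N}\sigma_{n-1}X_{n-1} \lesssim_\omega N^{1-2\alpha}$ for all large lacunary $N$, whence the second piece is $\lesssim_\omega \|h\|_\infty N^{-\alpha} \to 0$. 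Assembling the three contributions, all estimated uniformly in $b \in \mathcal{B}_N$, proves \eqref{max3}. The only real subtlety is bookkeeping: one must check that each of the Abel-summation byproducts is of strictly lower order after the $N^{1-\alpha}$ normalization, and it is precisely the hypothesis that $f$ be a coboundary that makes this step elementary, rather than requiring the $TT^\ast$ machinery of Section~\ref{sec:proposition}.
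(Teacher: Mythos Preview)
Your proof is correct and follows essentially the same route as the paper: Abel summation on the coboundary $f=h-Th$, then a split of $c_n-c_{n-1}$ into a $\sigma$-difference piece and a $b$-difference piece that is supported on $\{X_{n-1}=1\}$, leaving the random sum $\sum_{n\leq N}\sigma_{n-1}X_{n-1}$. The only cosmetic differences are that you telescope $\sum|\sigma_n-\sigma_{n-1}|\leq 1$ where the paper uses the pointwise bound $|\sigma_m-\sigma_{m-1}|\lesssim (m-1)^{-\alpha-1}$, and you control the random sum by Chernoff plus Borel--Cantelli along the lacunary scales, whereas the paper decomposes dyadically and invokes the strong law $S_K\lesssim_\omega K^{1-\alpha}$; both yield the same $N^{-\alpha}\|h\|_\infty$ decay.
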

\begin{proof}[Proof of Lemma \ref{final}]
Substituting $f = h - Th$ and summing by parts lets us bound the above maximal function by a constant multiple of 
\[  \frac{\|h\|_\infty}{N^{1-\alpha}} + \sup_{b \in \mathcal{B}} \left| \frac{1}{N^{1-\alpha}} \sum_{2 \leq m\leq N} \left( \sigma_m b(S_{m-1}+1) - \sigma_{m-1} b(S_{m-2}+1) \right) T^m h \right|,\]
which is in turn bounded by a constant multiple of
\[  \frac{\|h\|_\infty}{N^{1-\alpha}} + \frac{1}{N^{1-\alpha}} \sum_{2 \leq m \leq N} (m-1)^{-\alpha -1} \|h\|_\infty +
\frac{1}{N^{1-\alpha}} \sum_{2 \leq m \leq N} (m-1)^{-\alpha } X_{m-1} \|h\|_\infty.\]
The first two terms in the sum clearly tend to zero as $N \to \infty$. For the third term, we proceed as follows.
By the strong law of large numbers, $\omega$-almost surely, for any dyadic $2 \leq K \leq N$ 
\[ \sum_{K/2 < m \leq K} (m-1)^{-\alpha} X_{m-1} \lesssim K^{-\alpha} S_K \lesssim_\omega K^{1-2\alpha},\]
and so we may almost surely bound
\[ \frac{1}{N^{1-\alpha}} \sum_{2 \leq m \leq N} (m-1)^{-\alpha } X_{m-1} \|h\|_\infty \lesssim N^{-\alpha } \|h\|_\infty \to 0 \]
as well.
This completes the proof of Lemma \ref{final}, and with it, the proof of Theorem \ref{Main}.
\end{proof}


\begin{thebibliography}{9}

\bibitem{AWW}
I. Assani, Wiener Wintner Ergodic Theorems, World Scientific Publishing Co. Inc., River Edge, NJ,
2003.

\bibitem{BI}
G. D. Birkhoff. {Proof of the ergodic theorem}, Proc. Natl. Acad. Sci. USA \textbf{17} (1931), 656--660.

\bibitem{B0}
J. Bourgain.
{On the maximal ergodic theorem for certain subsets of the integers},
Israel J. Math. \textbf{61} (1988), no. 1, 39--72.

\bibitem{BWW}
J. Bourgain, \emph{Double recurrence and almost sure convergence}, J. Reine Angew. Math. \textbf{404} (1990),
140--161.

\bibitem{C}
A. Calder\'{o}n, {Ergodic theory and translation invariant operators}, Proc. Nat. Acad. Sci., USA \textbf{59} (1968), 349--353.

\bibitem{EK}
T. Eisner, B. Krause.
(Uniform) Convergence of Twisted Ergodic Averages.
Preprint, http://arxiv.org/pdf/1407.4736.pdf.

\bibitem{F}
D. Freedman.
On tail probabilities for martingales. 
Ann. Probability 3 (1975), 100-118. 

\bibitem{KZK}
B. Krause, P. Zorin-Kranich.
A random pointwise ergodic theorem with Hardy field weights.
Illinois J. Math. 59 (2015), no. 3, 663–674. 

\bibitem{LV}
P. LaVictoire. An $L^1$ ergodic theorem for sparse random subsequences, Math. Res. Lett. 16 (2009), no. 5, 849–859

\bibitem{RW}
J. Rosenblatt, M. Wierdl. {Pointwise ergodic theorems via harmonic
analysis}. Ergodic theory and its connections with harmonic analysis
(Alexandria, 1993). London Math. Soc. Lecture Note Ser., 205, Cambridge
Univ. Press, Cambridge, (1995), 3--151.


\bibitem{TV}
T. Tao and V. H. Vu. Additive combinatorics. Vol. 105. Cambridge Studies in Advanced Mathematics.
Paperback edition [of MR2289012]. Cambridge University Press, Cambridge, 2010,
pp. xviii+512. ISBN: 978-0-521-13656-3.

\bibitem{WW}
N. Wiener; A. Wintner, \emph{Harmonic analysis and ergodic theory}, Amer. J. Math. \textbf{63} (1941), 415--426.


\end{thebibliography}
\end{document}